\documentclass[12pt]{amsart}
\usepackage{amsmath, amsthm, amsfonts, amssymb}
\textwidth=16.0cm \textheight=9.2in \topmargin=0.mm
\headheight=0.0mm \oddsidemargin=0.0mm \evensidemargin=0.0mm
\newtheorem{theorem}{Theorem}[section]

\newtheorem{lemma}[theorem]{Lemma}
\theoremstyle{definition}

\begin{document}
\title[A result on the sum of element orders of a finite group]{ A result on the sum of element\\  orders of a finite group}
\author[  A. Bahri, B. Khosravi, Z. Akhlaghi ]{ Afsane Bahri, Behrooz Khosravi and Zeinab Akhlaghi }
\address{ Dept. of Pure  Math.,  Faculty  of Math. and Computer Sci. \\
Amirkabir University of Technology (Tehran Polytechnic)\\ 424,
Hafez Ave., Tehran 15914, Iran \newline }
\email{ afsanebahri@aut.ac.ir}
\email{ khosravibbb@yahoo.com}
\email{ z\_akhlaghi@aut.ac.ir}

\thanks{}
\subjclass[2000]{ 20D60, 20F16.}

\keywords{Finite group, order, sum of element orders, solvable group}

\begin{abstract}
Let $G$ be a finite group and $\psi(G)=\sum_{g\in{G}}{o(g)}$. There are some results about the relation between $\psi(G)$ and the structure of $G$. For instance, it is proved that if $G$ is a group of order $n$ and $\psi(G)>\dfrac{211}{1617}\psi(C_n)$, then $G$ is solvable. Herzog {\it{et al.}} in [Herzog {\it{et al.}}, Two new criteria for solvability of finite groups, J. Algebra, 2018] put forward the following conjecture:

\noindent{\bf Conjecture.} {\it {If $G$ is a non-solvable group of order $n$, then
$${\psi(G)}\,{\leq}\,{{\dfrac{211}{1617}}{\psi(C_n)}}$$
with equality if and only if $G=A_5$. In particular, this inequality holds for all non-abelian simple groups.} }

In this paper, we prove a modified version of Herzog's Conjecture.
\end{abstract}

\maketitle
\section{\bf Introduction}

 Let $G$ be a finite group and $\pi(G)$ be the set of all prime divisors of $|G|$. Let $\psi(G)=\sum_{g\in{G}}{o(g)}$. In \cite{isaacs}, it is proved that cyclic groups are characterized by their orders and the sum of element orders.
In fact, they proved that the maximum amount of this function occurs on cyclic groups among all groups of the same order. Many authors have investigated some other properties of this function (see \cite{dr,2011,upperbound,twocriteria,2m,2013,2014,2015}). The following conjecture was posed by Herzog {\it{et al.}} in \cite{twocriteria} :

\noindent\textbf{Conjecture.}
{\it{If $G$ is a non-solvable group of order $n$, then
$${\psi(G)}\,{\leq}\,{{\dfrac{211}{1617}}{\psi(C_n)}}$$
with equality if and only if $G=A_5$. In particular, this inequality holds for all non-abelian simple groups.}}

It is proved that if $G$ is a group of order $n$ and $\psi(G)>\dfrac{211}{1617}\psi(C_n)$, then $G$ is solvable (see \cite{dr},\cite{twocriteria}). Baniasad Azad and Khosravi in \cite{dr} gave some other groups the equality holds for. Actually, they showed that $A_5{\times}C_m$, where $(30,m)=1$, satisfies the equality. Here, we prove that these groups are the only groups the equality holds for. Thus, we prove a modified version of Conjecture 6 in \cite{twocriteria} as follows.

{\bf Main Theorem.} {\it {Suppose that $G$ is a non-solvable group of order $n$ and $\psi(G)=\dfrac{211}{1617}\psi(C_n)$. Then $G={A_5}\times{C_m}$, where $(30,m)=1$. }}
\section{\bf{Preliminary Results}}

\begin{lemma} \cite[Corollary B]{isaacs}\label{sylownormal}
Let $P\in{{\rm Syl}_p(G)}$, and assume that $P\,{\lhd}\,G$ and that $P$ is cyclic. Then $\psi(G)\,\leq\,{\psi(P)}{\psi(G/P)}$, with equality if and only if $P$ is central in $G$.
\end{lemma}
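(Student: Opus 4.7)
My plan is to parameterize $G$ via a Schur--Zassenhaus complement and then bound $o(xh)$ pointwise in terms of $o(x)$ and $o(h)$. Since $P\lhd G$ is a Sylow $p$-subgroup, the orders of $P$ and $G/P$ are coprime, so Schur--Zassenhaus yields a complement $H\leq G$ with $G=PH$ and $P\cap H=1$. Every $g\in G$ thus has a unique normal form $g=xh$ with $x\in P$, $h\in H$, and $H\cong G/P$.

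For such an element, setting $m=o(h)$, I would exploit the identity
$$(xh)^{m}\;=\;x\cdot(hxh^{-1})\cdot(h^{2}xh^{-2})\cdots(h^{m-1}xh^{-(m-1)})\;\in\;P,$$
so that $o(xh)=m\cdot o((xh)^{m})$. Since $P$ is cyclic and hence abelian, $(xh)^{m}$ is a product in $P$ of $m$ elements, each a conjugate of $x$ and so of order $o(x)$; its order therefore divides $o(x)$. This yields the pointwise inequality $o(xh)\leq o(h)\,o(x)$, and summing over all pairs $(x,h)\in P\times H$ gives
$$\psi(G)\;=\;\sum_{h\in H}\sum_{x\in P}o(xh)\;\leq\;\Bigl(\sum_{x\in P}o(x)\Bigr)\Bigl(\sum_{h\in H}o(h)\Bigr)\;=\;\psi(P)\,\psi(G/P).$$

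For the equality statement, one direction is immediate: if $P\leq Z(G)$ then $G=P\times H$ and elements of $P$ and $H$ have coprime orders, so $o(xh)=o(x)o(h)$ for every pair and equality holds throughout. The converse is the step I expect to be the main obstacle. Equality in the summed inequality forces $o(xh)=o(x)o(h)$ for every $(x,h)$, in particular when $x$ generates $P$. Writing the automorphism induced by $h$ on the cyclic group $P$ as $x\mapsto x^{c}$, the exponent of $x$ appearing in $(xh)^{m}$ is $s=1+c+c^{2}+\cdots+c^{m-1}$, and the equality $o(xh)=o(x)o(h)$ forces $\gcd(s,p)=1$. A short geometric-series calculation modulo $p$, using $c^{m}\equiv 1\pmod{|P|}$, shows this happens only when $c\equiv 1\pmod p$; hence $H$ centralizes the Frattini quotient $P/\Phi(P)$. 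Because $H$ is a $p'$-group and the kernel of $\mathrm{Aut}(P)\to\mathrm{Aut}(P/\Phi(P))$ is a $p$-group, the action of $H$ on $P$ itself must be trivial. Therefore $H\leq C_{G}(P)$, so $P\leq Z(G)$, completing the characterization of equality.
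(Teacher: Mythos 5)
Your proof is correct. Note that the paper itself does not prove this lemma --- it is imported verbatim from Amiri--Jafarian Amiri--Isaacs \cite[Corollary B]{isaacs} --- so the comparison is with that source rather than with anything in this paper; your argument is a valid self-contained proof in the same spirit (a coset/complement decomposition of $G$ over $P$ together with the pointwise bound $o(xh)\leq o(x)\,o(h)$). The details all check out: since $P$ is a normal Hall subgroup, Schur--Zassenhaus gives the complement $H$; the identity $(xh)^{m}=x(hxh^{-1})\cdots(h^{m-1}xh^{-(m-1)})$ together with $\gcd(m,p)=1$ gives $o(xh)=m\cdot o((xh)^{m})$, and the product of elements of order dividing $o(x)$ in the abelian group $P$ again has order dividing $o(x)$. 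For the converse, the one step you leave compressed does work: writing $hxh^{-1}=x^{c}$ with $c^{m}\equiv 1\pmod{p^{r}}$, if $c\not\equiv 1\pmod p$ then $v_{p}(s)=v_{p}(c^{m}-1)-v_{p}(c-1)\geq r\geq 1$ for $s=1+c+\cdots+c^{m-1}=(c^{m}-1)/(c-1)$, so $p\mid s$ and $o(x^{s}h)<o(x)o(h)$ for a generator $x$, violating equality; and the kernel of $\mathrm{Aut}(P)\to\mathrm{Aut}(P/\Phi(P))$ having order $p^{r-1}$ forces the $p'$-group $H$ to act trivially on $P$, whence $P\leq Z(G)$. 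I would only suggest writing out that valuation computation explicitly in a final version, since it is the crux of the equality characterization.
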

\begin{lemma} \cite[Proposition 2.6]{twocriteria}\label{subgroup}
Let $H$ be a normal subgroup of the finite group $G$. Then $\psi(G)\,\leq\,{\psi(G/H)}{|H|^2}$.
\end{lemma}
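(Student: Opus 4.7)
The plan is a straightforward coset-by-coset argument. Partition $G$ into cosets of $H$, $G=\bigsqcup_{i=1}^{[G:H]} g_i H$, so that
\[
\psi(G)=\sum_{i=1}^{[G:H]}\sum_{h\in H}o(g_i h);
\]
the task then reduces to bounding each inner sum by a quantity depending only on the coset $g_iH$.

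The key step is the following elementary observation: if $m_i=o(g_iH)$ denotes the order of the coset in $G/H$, then $(g_ih)^{m_i}\in H$ for every $h\in H$, so $(g_ih)^{m_i|H|}=1$, which forces
\[
o(g_ih)\le m_i|H|=|H|\,o(g_iH).
\]
This bound is uniform in $h$, so
\[
\sum_{h\in H}o(g_ih)\le |H|^2\,o(g_iH),
\]
and summing over a transversal gives
\[
\psi(G)\le |H|^2\sum_{i=1}^{[G:H]}o(g_iH)=|H|^2\,\psi(G/H),
\]
which is the required inequality.

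The argument has no real obstacle: it is essentially a two-line pigeonhole estimate once one notices the crude bound $o(x)\le |H|\cdot o(xH)$ valid for every $x\in G$. It is worth remarking that this estimate is typically far from sharp — the true order of $x$ divides $o(xH)\cdot o(x^{o(xH)})$, and the second factor, although dividing $|H|$, is usually much smaller — so equality in the lemma is rare. Nevertheless, the coarse form is exactly what is needed as a structural tool to control how $\psi$ behaves under quotients by normal subgroups, which is the role it will play in the subsequent analysis.
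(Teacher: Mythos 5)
Your argument is correct: the bound $o(g_ih)\le |H|\,o(g_iH)$ follows exactly as you say, and summing over the $|H|$ elements of each coset and then over a transversal yields $\psi(G)\le |H|^2\psi(G/H)$. The paper only cites this lemma from Herzog, Longobardi and Maj without reproducing a proof, and your coset-by-coset estimate is precisely the standard argument given there, so nothing further is needed.
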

\begin{lemma} \cite[Lemma 2.1]{2011}\label{directproduct}
If $G$ and $H$ are finite groups, then ${\psi(G{\times}H)}\,\leq\,{\psi(G)}{\psi(H)}$. Also $\psi(G{\times}H)={\psi(G)}{\psi(H)}$ if and only if ${\rm gcd}(|G|,|H|)=1$.
\end{lemma}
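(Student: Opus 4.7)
The plan is to reduce both claims to a single pointwise inequality on element orders. For any pair $(g,h)\in G\times H$, the order in the direct product is $o((g,h))=\operatorname{lcm}(o(g),o(h))$, and from elementary arithmetic one has $\operatorname{lcm}(a,b)\le ab$ with equality precisely when $\gcd(a,b)=1$. So I would first record these two facts and then sum over the Cartesian product.

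Summing yields
\[
\psi(G\times H)=\sum_{g\in G}\sum_{h\in H}\operatorname{lcm}(o(g),o(h))\le \sum_{g\in G}\sum_{h\in H}o(g)\,o(h)=\psi(G)\psi(H),
\]
which is the stated inequality. The inequality is term-by-term, so equality in the sum is equivalent to $\operatorname{lcm}(o(g),o(h))=o(g)\,o(h)$ for every pair $(g,h)$, i.e.\ to $\gcd(o(g),o(h))=1$ for every $g\in G$ and every $h\in H$.

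It then remains to translate this order-wise coprimality into the condition $\gcd(|G|,|H|)=1$. One direction is immediate: if $\gcd(|G|,|H|)=1$ then $o(g)\mid|G|$ and $o(h)\mid|H|$ force $\gcd(o(g),o(h))=1$ for all pairs, so every term attains equality. For the converse, suppose a prime $p$ divides both $|G|$ and $|H|$; Cauchy's theorem produces $g_0\in G$ and $h_0\in H$ of order $p$, and then $\operatorname{lcm}(p,p)=p<p^2=o(g_0)\,o(h_0)$, so that single term is strictly smaller and the total sum is strictly less than $\psi(G)\psi(H)$.

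There is no real obstacle here; the only care needed is the logical quantifier in the equality case (the condition must hold for \emph{every} pair, not just for one), which is precisely why appealing to Cauchy's theorem — rather than to some statement about average or maximal orders — gives the cleanest contradiction when $\gcd(|G|,|H|)>1$.
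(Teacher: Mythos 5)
Your proof is correct and is the standard argument for this lemma; the paper itself gives no proof, citing it from Amiri and Jafarian Amiri, where it is established in exactly this way (pointwise $\operatorname{lcm}(o(g),o(h))\le o(g)o(h)$, summed over $G\times H$, with Cauchy's theorem handling the equality case). Nothing is missing.
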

\begin{lemma} \cite[Proposition 2.5]{upperbound}\label{2p}
Let $G$ be a finite group and suppose that there exists $x\in{G}$ such that $|G:\langle{x}\rangle|\,<\,2p$, where $p$ is the maximal prime divisor of $|G|$. Then one of the following holds:
\begin{itemize}
\item[(i)] $G$ has a normal cyclic Sylow $p$-subgroup,
\item[(ii)] $G$ is solvable and $\langle{x}\rangle$ is a maximal subgroup of $G$ of index either $p$ or $p+1$.
\end{itemize}
\end{lemma}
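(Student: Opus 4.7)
The plan is to analyze the permutation action of $G$ on the left cosets of $H := \langle x\rangle$. Set $m = [G:H] < 2p$ and let $K := \mathrm{Core}_G(H) \leq H$ be the core, so that $\bar G := G/K$ embeds as a transitive subgroup of $S_m$. The crucial arithmetic observation is that $m < 2p$ forces the $p$-part of $m!$ to be at most $p$: any Sylow $p$-subgroup of $S_m$, if nontrivial, has order exactly $p$ and is generated by a single $p$-cycle. Let $P$ denote a Sylow $p$-subgroup of $G$.

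The first case to dispose of is $P \leq K$. Since $K$ is a subgroup of the cyclic group $H$ it is itself cyclic and $P$ is its unique Sylow $p$-subgroup; hence $P$ is characteristic in $K$ and therefore normal in $G$, giving conclusion (i). In the remaining case $P \not\leq K$, the image of $P$ in $\bar G$ has order exactly $p$, so $m \geq p$ and $\bar G$ is a transitive subgroup of $S_m$ containing a $p$-cycle. Maximality of $H$ in $G$ is essentially automatic in this case: if $H < M < G$ were intermediate, then $[G:M]$ would be a proper divisor of $m$, hence strictly less than $p$, forcing the Sylow $p$-subgroup of $G$ into $\mathrm{Core}_G(M)\leq H$ and returning us to conclusion (i).

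For $m = p$ the index is prime so $H$ is maximal, and I would apply Burnside's classical theorem on transitive groups of prime degree: such a group containing a $p$-cycle is either a Frobenius group with kernel of order $p$ (hence solvable) or $2$-transitive, and the non-solvable $2$-transitive candidates all involve prime divisors exceeding $p$, contradicting the maximality of $p$ in $\pi(G)$. For $p < m < 2p$ I would invoke Jordan's theorem: a primitive subgroup of $S_m$ containing a $p$-cycle with $p \leq m-3$ contains $A_m$; this, together with Bertrand's postulate, supplies a prime in $(p, 2p)$ dividing $|G|$, again contradicting the choice of $p$. The upshot is $m \leq p+2$, and a short additional argument rules out $m = p+2$, leaving $m = p+1$ with $\bar G$ imprimitive and solvable. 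Since $K$ is cyclic, solvability of $\bar G$ lifts to solvability of $G$, yielding conclusion (ii).

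The main obstacle will be the delicate case analysis of transitive subgroups of $S_m$ for $p \leq m < 2p$ containing a $p$-cycle, in particular eliminating the non-solvable $2$-transitive exceptions in prime degree using their classification and verifying imprimitivity (and hence solvability) in the small cases that survive Jordan's theorem. The essential tools will be Burnside's theorem on transitive groups of prime degree, Jordan's theorem on primitive groups containing a $p$-cycle, Sylow theory, and Bertrand's postulate.
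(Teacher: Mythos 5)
The paper does not actually prove this lemma; it is quoted verbatim from Herzog, Longobardi and Maj \cite{upperbound} (their Proposition 2.5), so there is no internal proof to compare against. Judged on its own, your outline has a reasonable skeleton (the coset action on $G/\langle x\rangle$, passage to $\bar G=G/\mathrm{Core}_G(H)\le S_m$, the observation that the $p$-part of $m!$ is at most $p$, and the dichotomy $P\le K$ versus $P\not\le K$), but the endgame contains concrete errors, all traceable to the fact that you set up the cyclicity of the point stabilizer $\bar H=H/K$ and then never use it. In the degree-$p$ case it is false that every non-solvable $2$-transitive group of prime degree $p$ involves a prime exceeding $p$: the groups $A_p$, $S_p$, $PSL(3,2)$ in degree $7$, $PSL(2,11)$ and $M_{11}$ in degree $11$, and $M_{23}$ in degree $23$ all have every prime divisor at most the degree, so your claimed contradiction does not materialize; these groups are excluded only because their point stabilizers are not cyclic. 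The same problem undermines the Jordan--Bertrand step: Bertrand's postulate produces a prime $q$ with $p<q<2p$, but $q$ divides $|A_m|$ only if $q\le m$, and nothing guarantees a prime in $(p,m]$. Concretely, for $p=7$ and $m=10$ Jordan's theorem would give $\bar G\ge A_{10}$, whose prime divisors are exactly $2,3,5,7$, so no contradiction arises; again only the non-cyclicity of the stabilizer $A_9$ rules this case out.

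There are two further gaps. The maximality reduction asserts $\mathrm{Core}_G(M)\le H$ for an intermediate subgroup $H<M<G$; this is false (one only has $\mathrm{Core}_G(M)\le M$), and even granting $P\le\mathrm{Core}_G(M)$ you still owe an argument that $P$ itself is normal \emph{and cyclic}, which is what conclusion (i) demands. Finally, the claim that an imprimitive transitive group of degree $p+1$ containing a $p$-cycle must be solvable is left unjustified; imprimitive groups of composite degree can be non-solvable, and eliminating them once more requires exploiting the cyclic stabilizer. In short, the plan is plausibly salvageable, but as written the decisive hypothesis --- that $\langle x\rangle$, hence the point stabilizer of $\bar G$, is cyclic --- does no work in any of the contradictions you claim, and several of those contradictions are simply false without it.
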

\begin{lemma} \cite[Lemma 2.9]{upperbound}\label{cyclicsum}

\begin{itemize}
\item[(1)] {If $P$ is a cyclic group of order $p^r$ for some prime $p$, then
$$\psi(P)=\dfrac{{p^{2r+1}}+1}{p+1}=\dfrac{p|P|^2+1}{p+1}.$$}
\item[(2)] {Let ${p_1}<{p_2}<{\cdots}<{{p_t}=p}$ be the prime divisors of $n$ and denote the corresponding Sylow subgroups of $C_n$ by ${P_1},{P_2},{\cdots},P_t$. Then
$$\psi(C_n)=\prod_{i=1^t}{\psi(P_i)}\geq{\dfrac{2}{p+1}}n^2.$$}
\end{itemize}
\end{lemma}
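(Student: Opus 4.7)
The result has two parts that I would handle quite differently. Part (1) is a direct count. Fixing $P=C_{p^r}$, the number of elements of order $p^k$ is $\varphi(p^k)=p^k-p^{k-1}$ for $1\le k\le r$, plus a single identity element; hence
$$
\psi(P)\;=\;1+\sum_{k=1}^{r}(p^{k}-p^{k-1})\,p^{k}\;=\;1+\sum_{k=1}^{r}(p^{2k}-p^{2k-1}).
$$
Both sums are geometric, and after collapsing them I expect the closed form $(p^{2r+1}+1)/(p+1)=(p|P|^{2}+1)/(p+1)$ to fall out cleanly. This step is routine algebra and should present no real obstacle.

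For part (2), the Sylow subgroups $P_i$ of the cyclic group $C_n$ are cyclic $p_i$-groups, and they have pairwise coprime orders. A single pairing via Lemma~\ref{directproduct} (or a trivial induction on $t$ using its equality clause) immediately gives the identity $\psi(C_n)=\prod_{i=1}^{t}\psi(P_i)$.

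The real content is the lower bound. Applying part (1) to each factor and discarding the ``$+1$'' in the numerator yields $\psi(P_i)\ge p_i|P_i|^{2}/(p_i+1)$, so
$$
\psi(C_n)\;\ge\;n^{2}\prod_{i=1}^{t}\frac{p_i}{p_i+1}.
$$
Everything therefore reduces to the number-theoretic inequality $\prod_{i=1}^{t}\tfrac{p_i}{p_i+1}\ge\tfrac{2}{p+1}$, and this is the step I expect to require the most thought, since a naive termwise estimate such as $\tfrac{p_i}{p_i+1}\ge\tfrac12$ is far too wasteful as $t$ grows.

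The trick I would use is a telescoping estimate exploiting the fact that distinct primes in increasing order satisfy $p_{i+1}\ge p_i+1$, whence $\tfrac{p_i}{p_i+1}\ge\tfrac{p_i}{p_{i+1}}$ for $i=1,\dots,t-1$. Keeping the top factor intact then gives
$$
\prod_{i=1}^{t}\frac{p_i}{p_i+1}\;\ge\;\Bigl(\prod_{i=1}^{t-1}\frac{p_i}{p_{i+1}}\Bigr)\cdot\frac{p}{p+1}\;=\;\frac{p_1}{p_t}\cdot\frac{p}{p+1}\;=\;\frac{p_1}{p+1}\;\ge\;\frac{2}{p+1},
$$
where the last step uses $p_1\ge 2$. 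Combined with the bound above, this closes the plan.
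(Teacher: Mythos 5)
Your proof is correct: the geometric-sum computation in part (1), the coprime factorization via the equality case of Lemma~\ref{directproduct}, and the telescoping bound $\prod_{i=1}^{t}\frac{p_i}{p_i+1}\geq\frac{p_1}{p_t}\cdot\frac{p}{p+1}\cdot\frac{p_t}{p}=\frac{p_1}{p+1}\geq\frac{2}{p+1}$ all check out. The paper itself gives no proof here --- it imports the lemma from \cite[Lemma 2.9]{upperbound} --- and your argument is essentially the standard one found there, so nothing further is needed.
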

\begin{lemma} \cite[Theorem 1]{twocriteria}\label{A/B}
Let $G$ be a finite group of order n containing a subgroup $A$ of prime power index $p^s$. Suppose that $A$ contains a normal cyclic subgroup $B$ satisfying the following condition: $A/B$ is a cyclic group of order $2^r$ for some non-negative integer $r$. Then $G$ is a solvable group.
\end{lemma}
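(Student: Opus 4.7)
The plan is to argue by contradiction with a minimal counterexample: let $G$ be a non-solvable group of least order satisfying the hypothesis, with the associated subgroups $A \leq G$ of index $p^s$ and $B \triangleleft A$ as in the statement. My first task is to reduce to the case where $G$ is non-abelian simple. For any proper non-trivial normal subgroup $N$ of $G$, I would verify that the hypothesis descends to both $N$ and $G/N$: inside $N$, the subgroup $A \cap N$ has index $[AN:A]$, a divisor of $p^s$, and $B \cap N \triangleleft A \cap N$ is cyclic with $(A \cap N)/(B \cap N)$ embedding in $A/B$ (hence cyclic of $2$-power order); a symmetric argument applies to $AN/N \leq G/N$ with normal cyclic subgroup $BN/N$ and cyclic $2$-power quotient. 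By minimality, both $N$ and $G/N$ would then be solvable, making $G$ itself solvable, a contradiction. Thus $G$ must be simple and non-abelian.

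Next I would exploit simplicity to constrain $G$ severely. Since $A$ is cyclic-by-cyclic it is metacyclic and in particular solvable, so the core $K = \bigcap_{g \in G} A^g$ is a solvable normal subgroup of $G$ and hence trivial. The coset action of $G$ on $G/A$ therefore embeds $G$ faithfully in $S_{p^s}$. Moreover, for every prime $q \neq p$ a Sylow $q$-subgroup of $G$ is contained in $A$; for odd $q \neq p$ it in fact lies in $B$ and is consequently cyclic, while the Sylow $2$-subgroup of $G$ is at worst metacyclic, being an extension of a cyclic group by a cyclic $2$-group.

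The principal obstacle is the final contradiction: ruling out any non-abelian finite simple group $G$ with a subgroup of prime power index of the specified cyclic-by-cyclic-$2$ form. My approach would combine the Sylow restrictions above with classical transfer and fusion theorems, and where needed appeal to classification. When the Sylow $2$-subgroup of $G$ happens to be cyclic---for instance if $r \leq 1$ and $|B|$ is odd---Burnside's normal $p$-complement theorem applied at the prime $2$ produces a proper non-trivial normal subgroup, contradicting simplicity; analogous transfer arguments ought to handle other low-rank Sylow $2$-configurations. The remaining cases appear to require Guralnick's classification of finite non-abelian simple groups possessing a subgroup of prime power index: this confines $(G,A)$ to a short explicit list (essentially $A_n$ with $n$ a prime power, certain $\mathrm{PSL}_n(q)$ acting on projective points or hyperplanes, and a handful of small sporadic exceptions), and one then verifies in each case that the stabiliser $A$ is not metacyclic of the required form (for example $A_{n-1}$ with $n \geq 5$ is not metacyclic at all). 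This case-by-case check, and above all finding an argument that sidesteps the full classification theorem, is where I expect the real difficulty to lie.
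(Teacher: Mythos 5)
This statement is not proved in the paper at all: it is Theorem~1 of Herzog--Longobardi--Maj \cite{twocriteria}, imported as a black box, so there is no internal proof to compare against. Judged on its own terms, your outline is the natural one and, as far as the published literature goes, essentially the route the original authors take: reduce by a minimal counterexample to a non-abelian simple group (your verification that the hypotheses pass to $N$ and $G/N$ is correct -- $(A\cap N)/(B\cap N)$ embeds in $A/B$, and $BN/N$ is normal cyclic in $AN/N$ with $2$-power cyclic quotient), then invoke Guralnick's classification of simple groups with a subgroup of prime power index and eliminate each candidate stabiliser.

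Two points need attention. First, your claim that ``the Sylow $2$-subgroup of $G$ is at worst metacyclic'' is only valid when $p$ is odd; if $p=2$ the Sylow $2$-subgroup of $G$ properly contains a Sylow $2$-subgroup of $A$ and inherits no such structure, so that strand of the argument must be restricted to odd $p$ (for $p=2$ you instead get that all odd-order Sylow subgroups are cyclic, which feeds into the same endgame). Second, and more substantively, the entire content of the lemma lives in the case-by-case elimination over Guralnick's list ($A_{p^s}$ with stabiliser $A_{p^s-1}$, ${\rm PSL}_n(q)$ with parabolic stabiliser of index $(q^n-1)/(q-1)=p^s$, ${\rm PSL}_2(11)$, $M_{11}$, $M_{23}$, ${\rm PSU}_4(2)$), which you correctly identify but do not carry out; it is routine but not free (e.g.\ for ${\rm PSL}_2(q)$ one must check that the Frobenius stabiliser $q{:}\frac{q-1}{(2,q-1)}$ never has a normal cyclic subgroup with cyclic $2$-group quotient under the arithmetic constraint $q+1=p^s$). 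Your stated hope of sidestepping the classification is almost certainly vain, since Guralnick's theorem itself rests on CFSG. So: right skeleton, correct reductions, but the proof is not complete as written.
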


\begin{lemma} \cite[Theorem]{Herstein}\label{maximalabeli}
Let $G$ be a finite group, $A$ an Abelian subgroup of $G$. If $A$ is a maximal subgroup of $G$ then $G$ is solvable.
\end{lemma}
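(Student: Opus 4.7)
The plan is to prove Herstein's lemma by induction on $|G|$, first reducing to the case where the abelian maximal subgroup $A$ is core-free and then analyzing a minimal normal subgroup. The base case $|G|$ prime is immediate. For the inductive step, set $K=\mathrm{Core}_G(A)=\bigcap_{g\in G}gAg^{-1}$. Then $K\trianglelefteq G$ and $K\le A$, so $K$ is abelian. If $K\ne 1$, then $A/K$ is an abelian maximal subgroup of $G/K$, so by induction $G/K$ is solvable; together with the abelian (hence solvable) $K$ this gives $G$ solvable. Thus we may assume $A$ is core-free.

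Next I would pick a minimal normal subgroup $M$ of $G$. Since $A$ is core-free, $M\not\le A$, and maximality of $A$ gives $G=AM$. Let $D=A\cap M$. Because $A$ is abelian, $A\le N_G(D)$, so maximality forces $N_G(D)\in\{A,G\}$. If $N_G(D)=G$, then $D\trianglelefteq G$ with $D\le A$, so $D\le K=1$, whence $G=M\rtimes A$. If $N_G(D)=A$, then $N_M(D)=A\cap M=D$, so $D$ is self-normalizing in $M$.

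The abelian case for $M$ is easy: a self-normalizing subgroup of an abelian group is the whole group, so $N_G(D)=A$ would give $D=M$, contradicting $M\not\le A$. Hence $D=1$ and $G=M\rtimes A$ is an extension of one abelian group by another, thus metabelian and solvable.

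The main obstacle is the case $M=S_1\times\cdots\times S_k$, a direct product of isomorphic non-abelian simple groups; here I would aim at a contradiction. The levers are: $C_G(M)$ is normal in $G$ with $C_G(M)\cap M=Z(M)=1$, so $C_G(M)\le A$ is forced (else $C_G(M)A=G$ by maximality, but then every element of $A$ commutes with a full transversal of $M$ giving $A\le C_G(M)\cdot A$-style restrictions), and therefore $C_G(M)\le K=1$; consequently $A$ embeds into $\mathrm{Aut}(M)$ via conjugation, while $G=AM$ with $A$ abelian tightly constrains $|A|$ relative to $|M|$. Choosing the largest prime $p$ dividing $|A|$, the Sylow $p$-subgroup $P$ of $A$ is central in $A$; maximality shows $N_G(P)=A$ (the alternative $N_G(P)=G$ makes $P$ a nontrivial normal subgroup contained in $A$, violating $K=1$). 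Arranging the choice of $p$ so that $P$ is also a Sylow $p$-subgroup of $G$ and applying Burnside's normal $p$-complement theorem yields a proper normal subgroup of $G$, from which the induction and the solvability of $G/N\cong P$ abelian would finish the proof. The hardest step is justifying that such a $p$ with $P$ Sylow in $G$ exists under the present core-free, non-abelian-socle hypotheses, and this is where the structure of a minimal normal subgroup of non-abelian-simple type together with the abelian complement $A$ must be pushed to a contradiction; this is the piece that Herstein's original paper handles with a delicate Sylow and counting argument.
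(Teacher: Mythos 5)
The paper never proves this lemma: it is quoted verbatim from Herstein's 1958 note \emph{A remark on finite groups}, so there is no in-paper argument to measure you against, and the question is simply whether your proof stands on its own. Your reduction to the core-free case and your treatment of an abelian minimal normal subgroup are correct and complete: if $K=\mathrm{Core}_G(A)\neq 1$ induction applies to $G/K$, and if $M$ is an abelian minimal normal subgroup then $D=A\cap M$ cannot be self-normalizing in $M$ unless $D=M\le A$, so $N_G(D)=G$, $D\le K=1$, and $G=M\rtimes A$ is metabelian.

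The gap is exactly where you locate it, and it is genuine: the whole case of a non-abelian minimal normal subgroup is a sketch whose two load-bearing steps are missing. First, the step you call the hardest --- finding $p\mid |A|$ with $P\in\mathrm{Syl}_p(A)$ also Sylow in $G$ --- actually holds for \emph{every} prime $p$ dividing $|A|$ by a standard argument you did not supply: if $P<P^*\in\mathrm{Syl}_p(G)$ then $N_{P^*}(P)>P$ because normalizers grow in $p$-groups, yet $N_{P^*}(P)\le N_G(P)=A$ and every $p$-element of the abelian group $A$ lies in its unique Sylow $p$-subgroup $P$, forcing $N_{P^*}(P)\le P$, a contradiction. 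Second, and more seriously, your proposed finish fails as stated: once Burnside gives a normal $p$-complement $N$, you cannot invoke the inductive hypothesis on $N$, since $A\cap N$ need not be a \emph{maximal} subgroup of $N$, and solvability of $G/N\cong P$ alone says nothing about $G$. The correct way to close the argument is to choose $p$ dividing $\gcd(|A|,|M|)$; such a prime exists, because either $A\cap M\neq 1$ (take $p\mid |A\cap M|$), or $A\cap M=1$ and coprimality would give, via Schur--Zassenhaus and the Frattini argument, an $A$-invariant Sylow subgroup $Q$ of $M$ with $A<QA<G$, contradicting maximality of $A$. With such a $p$, minimality of $M$ gives $M\cap N\in\{1,M\}$, and $p\mid |M|$ rules out $M\le N$, so $M$ embeds in the abelian $p$-group $G/N$, contradicting that $M$ is non-abelian. (Your justification that $C_G(M)\le A$ is also garbled; the clean version is that $C_G(M)A=G$ would make $G/C_G(M)$ abelian while containing an isomorphic copy of the non-abelian $M$.) Without these repairs the proposal does not prove the lemma.
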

\begin{lemma} \cite[Lemma 9.1]{finiteisaacs}\label{G'}
Let $G$ be a group, and  suppose that $G/Z(G)$ is simple. Then $G/Z(G)$ is nonabelian, and $G'$ is perfect. Also $G'/Z(G')$ is isomorphic to the simple group $G/Z(G)$.
\end{lemma}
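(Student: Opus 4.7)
The plan is to handle the three assertions in sequence, driven by a single idea: by simplicity of $G/Z(G)$, every normal subgroup $N \trianglelefteq G$ satisfies either $N \leq Z(G)$ or $NZ(G) = G$, and in each context one of these alternatives collapses quickly.

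First I would dispatch nonabelianity. If $G/Z(G)$ were simple and abelian, then it is $C_p$ for a prime $p$ and hence cyclic; but $G/Z(G)$ cyclic forces $G$ abelian, so $G = Z(G)$, giving the contradiction $p = |G/Z(G)| = 1$. Since $G/Z(G)$ is therefore nonabelian simple, it is perfect, so $(G/Z(G))' = G/Z(G)$ translates to $G'Z(G) = G$.

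Next, to show $G'$ is perfect, I would apply the dichotomy to $N = G''$, which is characteristic in $G$ and hence normal. If $G'' \leq Z(G)$, then $G/G''$ is metabelian, forcing its quotient $G/Z(G)$ to be solvable — impossible. Hence $G''Z(G) = G$, and I would invoke the identity $[HZ, KZ] = [H, K]$ whenever $Z \leq Z(G)$ (since central factors drop out of commutators) to obtain
$$
G' = [G,G] = [G''Z(G), G''Z(G)] = [G'', G''] \leq G'',
$$
which combined with $G'' \leq G'$ gives $G' = G''$.

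Finally, for $G'/Z(G') \cong G/Z(G)$, the containment $G' \cap Z(G) \leq Z(G')$ is immediate since elements of $Z(G)$ centralize everything, a fortiori $G'$. For the reverse, $Z(G')$ is characteristic in $G'$ and hence normal in $G$, so I apply the dichotomy again: if $Z(G')Z(G) = G$, then by the same commutator-killing identity $G' = [Z(G'), Z(G')] = 1$, making $G$ abelian — impossible. Thus $Z(G') \leq Z(G)$ and so $Z(G') = G' \cap Z(G)$, whence the second isomorphism theorem yields
$$
G'/Z(G') = G'/(G' \cap Z(G)) \cong G'Z(G)/Z(G) = G/Z(G).
$$

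The main obstacle is setting up the dichotomy for $G''$ cleanly: once the computation $[G''Z(G), G''Z(G)] = [G'', G'']$ is in hand, the identical template applies to $Z(G')$ and the entire argument runs off a single technical lemma about commutators modulo a central subgroup.
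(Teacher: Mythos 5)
Your argument is correct in every step; note, though, that the paper gives no proof of this statement at all --- it is quoted verbatim from Isaacs' \emph{Finite Group Theory} (Lemma 9.1), so there is nothing in the paper itself to compare against. Your proof is essentially the standard one: the dichotomy for normal subgroups modulo $Z(G)$, the identity $[HZ,KZ]=[H,K]$ for central $Z$, and the second isomorphism theorem. One small streamlining: the detour through the dichotomy applied to $G''$ is unnecessary. Once you have $G=G'Z(G)$ from perfection of the nonabelian simple group $G/Z(G)$, the commutator identity gives directly
$$G'=[G,G]=[G'Z(G),\,G'Z(G)]=[G',G']=G'',$$
so $G'$ is perfect without any case analysis. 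Your handling of the metabelian case for $G''\leq Z(G)$ is valid but redundant. The treatment of $Z(G')$ via the dichotomy, ruling out $Z(G')Z(G)=G$ because it would force $G'=[Z(G'),Z(G')]=1$, is exactly right and is the one place the dichotomy is genuinely needed.
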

\begin{lemma} \cite[Corollary 5.14]{finiteisaacs}\label{minsylow}
Let $P\in{{\rm Syl}_p(G)}$, where $G$ is a finite group and $p$ is the smallest prime divisor of $|G|$, and assume that $P$ is cyclic. Then $G$ has a normal $p$-complement.
\end{lemma}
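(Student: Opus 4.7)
The plan is to derive this lemma as a consequence of Burnside's normal $p$-complement theorem (Burnside's transfer theorem), which asserts that if a Sylow $p$-subgroup $P$ of a finite group $G$ satisfies $P\le Z(N_G(P))$, then $G$ has a normal $p$-complement. The whole argument reduces to verifying this centrality condition, and in the present setting it will be forced by an arithmetic clash between $|{\rm Aut}(P)|$ and the hypothesis that $p$ is the smallest prime divisor of $|G|$.

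First, since $P$ is cyclic and hence abelian, $P\subseteq C_G(P)$, so the condition $P\le Z(N_G(P))$ is equivalent to the equality $N_G(P)=C_G(P)$. The standard $N/C$-embedding realizes $N_G(P)/C_G(P)$ as a subgroup of ${\rm Aut}(P)$, and since $P$ is cyclic of order $p^n$ we have $|{\rm Aut}(P)|=p^{n-1}(p-1)$. Moreover, because $P\subseteq C_G(P)$, the index $[N_G(P):C_G(P)]$ divides $[G:P]$ and is therefore coprime to $p$. Combining these two facts, the order $|N_G(P)/C_G(P)|$ is a divisor of $p-1$.

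Next I would invoke the minimality of $p$. Any prime divisor $q$ of $|N_G(P)/C_G(P)|$ divides both $|G|$ and $p-1$, so $q<p$; since $p$ is the smallest prime divisor of $|G|$, no such $q$ exists, and hence $N_G(P)=C_G(P)$. Thus $P$ lies in the center of its normalizer, and Burnside's transfer theorem produces the desired normal $p$-complement in $G$.

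The only real obstacle is having Burnside's transfer theorem available as a black box. Once it is cited, every remaining step is a short order-counting argument, and no case analysis, structure theory, or character-theoretic input beyond the $N/C$-theorem is required.
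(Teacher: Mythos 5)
Your argument is correct and is essentially the standard proof of this fact: the paper itself gives no proof (it simply cites Isaacs, \emph{Finite Group Theory}, Corollary 5.14), and the cited source derives it exactly as you do, from Burnside's transfer theorem after using the $N/C$-embedding, $|{\rm Aut}(P)|=p^{n-1}(p-1)$, and the minimality of $p$ to force $N_G(P)=C_G(P)$. No gaps.
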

\begin{lemma} \cite[Theorem 2.20]{finiteisaacs}\label{core}
Let $A$ be a cyclic proper subgroup of a finite group $G$, and let $K={\rm core}_G(A)$. Then $|A:K|\,<\,|G:A|$, and in particular, if $|A|\,\geq\,|G:A|$, then $K\,>\,1$
\end{lemma}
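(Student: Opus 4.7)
The argument hinges on the coset action of $G$ on $\Omega = G/A$ by left multiplication. Its kernel is exactly $K$, so $G/K$ embeds as a transitive subgroup of $\mathrm{Sym}(\Omega)$ of degree $n = |G:A|$ with point stabilizer $A/K$. In particular, if $a$ generates $A$, then the permutation $\sigma$ of $\Omega$ induced by $a$ has order equal to $|A:K|$, which is also the least common multiple of its cycle lengths.

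The plan is to reduce to the case $K = 1$ and then prove $|A| < n$ directly. Passing from $(G, A)$ to $(G/K, A/K)$ makes the core of $A/K$ in $G/K$ trivial (by maximality of $K$), so it suffices to treat the reduced situation in which $\mathrm{core}_G(A) = 1$. Writing the cycle lengths of $\sigma$ on $\Omega$ as $c_1, \ldots, c_s$, we have $\sum_i c_i = n$, each $c_i \mid |A|$, $\mathrm{lcm}(c_1, \ldots, c_s) = |A|$, and at least one $c_i = 1$ since the coset $A$ is fixed. The crucial extra input is a divisibility consequence of Lagrange's theorem. For each divisor $d$ of $|A|$, the unique subgroup $A_d \le A$ of order $d$ is characteristic in $A$, so $A \subseteq N_G(A_d)$; a direct calculation shows that the number of $\sigma^{|A|/d}$-fixed points in $\Omega$ equals both $[N_G(A_d) : A]$ and
\[
F(d) \;=\; \sum_{c_i \,\mid\, |A|/d} c_i.
\]
Since $[N_G(A_d) : A]$ divides $[G:A] = n$, the combinatorial quantity $F(d)$ divides $n$ for every divisor $d$ of $|A|$.

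The final, most delicate step is the combinatorial conclusion: given positive integers $c_1, \ldots, c_s$ with $\sum c_i = n$, each $c_i \mid |A|$, at least one $c_i = 1$, $\mathrm{lcm}(c_i) = |A|$, and $F(d) \mid n$ for every $d \mid |A|$, one must derive $|A| < n$. A purely arithmetic bound fails --- the pattern $(c_i) = (1,2,3)$ with $n = 6$ has $\mathrm{lcm}(c_i) = 6$, not $< 6$ --- but the divisibility constraint rules that case out, since $F(2) = 1 + 3 = 4$ does not divide $6$. My approach to the general case is to isolate a prime $p$ dividing $|A|$ and use $F(|A|/p)$, whose summands are exactly the $c_i$ equal to $1$ or $p$, together with the existence of a cycle of length divisible by the full $p$-part of $|A|$, to play the $p$-parts of the cycle lengths off against $n$ until a contradiction is reached; I expect this case analysis to be the main obstacle. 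Once $|A:K| < |G:A|$ is established, the ``in particular'' clause is immediate: $|A| \ge |G:A|$ combined with $|A:K| < |G:A|$ gives $|A:K| < |A|$, i.e., $K > 1$.
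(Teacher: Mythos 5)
This lemma is not proved in the paper at all: it is Lucchini's theorem, imported verbatim as \cite[Theorem 2.20]{finiteisaacs}, and in Isaacs' book it is deduced from Zenkov's theorem on minimal intersections $A\cap B^g$ of abelian subgroups lying in the Fitting subgroup, by induction on $|G|$. So your route is genuinely different from the source's. Your setup is sound as far as it goes: the kernel of the action on $G/A$ is indeed $K$, the reduction to $K=1$ is legitimate, the cycle lengths $c_i$ of a generator satisfy $\sum c_i=n$, $\mathrm{lcm}(c_i)=|A|$, some $c_i=1$, and your fixed-point count is correct --- since $A$ is cyclic, $A_d^{\,x^{-1}}\le A$ forces $A_d^{\,x^{-1}}=A_d$, so the fixed cosets of $A_d$ are exactly those in $N_G(A_d)$, whence $F(d)=[N_G(A_d):A]$ divides $n$.

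The genuine gap is that you never prove the combinatorial statement that is the entire content of the theorem. You correctly observe that the naive constraints do not suffice (the pattern $(1,2,3)$ has lcm equal to the sum), you add the divisibility constraints $F(d)\mid n$, and then you write that you ``expect this case analysis to be the main obstacle.'' That obstacle is the theorem. The constraints you have extracted are individually weak: for each prime $p$ with $p^a\,\|\,|A|$ one only gets that $n-F(p)$ is a positive multiple of $p^a$ and $F(p)$ is a proper divisor of $n$, hence $F(p)\le n/2$; combining these over several primes rules out two-prime lcm's and the small candidates you list, but near-misses such as $(1,6,14,21)$ with $n=42$ and lcm $42$ survive everything except one specific divisor check, which shows how delicate the general argument must be. Until you supply a complete proof that the conditions ``$\sum c_i=n$, some $c_i=1$, and $\sum_{c_i\mid e}c_i$ divides $n$ for every proper divisor $e$ of $L=\mathrm{lcm}(c_i)$'' force $L<n$, this is a proof sketch with its main lemma missing, and it is not evident that the constraints you have isolated are strong enough without further group-theoretic input. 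If you want a complete argument, the Zenkov-theorem route in Isaacs avoids this combinatorics entirely.
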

\begin{lemma} \cite[Theorem 3.1]{hall}\label{r}
If $n=1+rp$, with $1\,<r<\,(p+3)/2$ there is not a group $G$ with $n$ Sylow $p$-subgroups unless $n=q^t$ where $q$ is a prime, or $r=(p-3)/2$ and $p\,>\,3$ is a Fermat prime.
\end{lemma}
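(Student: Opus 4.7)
By Sylow's Third Theorem, any hypothetical counterexample $G$ satisfies $n_p = n \equiv 1 \pmod{p}$ and $n \mid |G|$. Fix a Sylow $p$-subgroup $P$ and set $N = N_G(P)$, so that $n = [G:N]$. The conjugation action of $G$ on the $n$-element set $\Omega$ of Sylow $p$-subgroups is transitive, giving a homomorphism $\varphi : G \to \operatorname{Sym}(\Omega) \cong S_n$ whose kernel is $K = \operatorname{core}_G(N)$. Passing to $G/K$, I would assume $G$ is a faithful transitive subgroup of $S_n$ of degree $n = 1 + rp$ with point stabilizer $N$.

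The next step is to split into the primitive and imprimitive cases for this action. In the imprimitive case, a non-trivial block system of size $d \mid n$, with $1 < d < n$, yields an intermediate subgroup $H$ with $N < H < G$ and $[G:H] = n/d$. A divisibility analysis of the factorization $1 + rp = d \cdot (n/d)$ under the tight constraint $r < (p+3)/2$ either reduces the argument to a smaller permutation group of the same type or forces $n$ itself to collapse to a prime power. In the primitive case, $N$ is a maximal subgroup of $G$, and I would exploit the orbit decomposition of $P$ acting on $\Omega$: $P$ fixes $P$ itself and partitions the remaining $rp$ Sylow subgroups into orbits of $p$-power size, so the smallness of $r$ forces a very restricted orbit pattern and hence very restricted subgroup structure on $N$.

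The main obstacle is the fine case analysis in the primitive case, where one must show that the only surviving possibilities are $n = q^t$ (corresponding to an affine primitive action of $G$ on $\mathbb{F}_q^t$) and the exceptional family $r = (p-3)/2$ with $p > 3$ a Fermat prime (corresponding to a particular low-degree primitive representation). This structural classification is precisely the content of Hall's Theorem 3.1 in \cite{hall}; rather than reproducing it, I would invoke it as a black box, which is what the authors do here by citing the lemma rather than proving it.
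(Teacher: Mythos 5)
The paper gives no proof of this lemma at all: it is quoted directly from Hall's 1967 paper (Theorem 3.1 of \cite{hall}) and used as a black box, which is exactly where your proposal ends up. Your intermediate sketch of primitive/imprimitive actions and $P$-orbit counting is speculative and not verified (and Hall's actual argument is a more elementary count involving intersections of Sylow subgroups rather than primitivity of the conjugation action), but since you ultimately defer to the citation just as the authors do, the two treatments agree in the only respect that matters here.
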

\begin{lemma} \cite[Theorem 3.2]{hall}\label{sylownumber}
There is no group $G$ with $n_3=22$, with $n_5=21$, or with $n_p=1+3p$ for $p\,\geq\,7$.
\end{lemma}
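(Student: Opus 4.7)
\textbf{Proof proposal for Lemma \ref{sylownumber}.} The three assertions split naturally according to whether the preceding Lemma \ref{r} can be applied directly. My plan is to dispose of the third claim by combining Lemma \ref{r} with a short number-theoretic calculation, and to handle the two small exceptional counts $n_3=22$ and $n_5=21$ by a permutation-theoretic analysis of the conjugation action on the set of Sylow subgroups.

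For $n_p=1+3p$ with $p\geq 7$, I would take $r=3$ in Lemma \ref{r}. The hypothesis $1<r<(p+3)/2$ becomes $p>3$, and the exceptional equality $r=(p-3)/2$ would force $p=9$, which is not prime. Hence Lemma \ref{r} forces $1+3p=q^t$ for some prime $q$. Since $p$ is odd, $1+3p$ is even, so $q=2$, and $3p=2^t-1$. Reducing modulo $3$ forces $t$ to be even, say $t=2m$; the factorization $3p=(2^m-1)(2^m+1)$, together with coprimality of $2^m\pm 1$, pins $\{2^m-1,2^m+1\}=\{3,p\}$ and hence $p=5$, which is excluded. So no group with $n_p=1+3p$ exists for $p\geq 7$.

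For $n_3=22$ and $n_5=21$ the parameter $r$ equals $7$ and $4$, both outside the range $r<(p+3)/2$ demanded by Lemma \ref{r} (which gives $r<3$ and $r<4$ respectively), so a direct appeal is impossible. My plan is to consider the homomorphism $\phi\colon G\to S_{n_p}$ induced by the transitive conjugation action on $\mathrm{Syl}_p(G)$, pass to the transitive image $\bar G=\phi(G)\leq S_{n_p}$, and observe that a Sylow $p$-subgroup $\bar P$ of $\bar G$ fixes exactly the point corresponding to $\bar P$ itself while splitting the remaining $3p$ points into orbits of lengths divisible by $p$. This tightly constrains $|\bar P|$ and the structure of the point stabilizer $N_{\bar G}(\bar P)$, and the problem reduces to sifting through the transitive subgroups of $S_{21}$ or $S_{22}$ with a compatible stabilizer.

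The hard part lies precisely in this final sifting step: one must verify, case by case, that no transitive subgroup of $S_{22}$ or $S_{21}$ --- including the Mathieu-type groups on $22$ points and the affine or classical-group sections on $21$ points --- actually carries the prescribed Sylow count $22$ or $21$, typically by showing that the known subgroup structure forces a strictly different number of Sylows. This classification-style case check is the delicate heart of the argument and is exactly the analysis carried out by Hall in \cite{hall}.
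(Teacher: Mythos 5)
You should first note that the paper does not prove this lemma at all: it is imported verbatim as Theorem 3.2 of Hall's paper \cite{hall}, so there is no internal argument to compare against. Your derivation of the third assertion is complete and correct as a consequence of Lemma \ref{r}: with $r=3$ the hypothesis $1<r<(p+3)/2$ holds for $p\geq 7$, the Fermat-prime exception would need $p=9$, and the prime-power exception $1+3p=2^t$ is killed by your factorization $3p=(2^m-1)(2^m+1)$ with coprime factors, which only permits $p=5$. That part is a genuine self-contained reduction that the paper does not supply.

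The gap is in the first two assertions. For $n_3=22$ and $n_5=21$ you correctly observe that Lemma \ref{r} is inapplicable ($r=7\not<3$ and $r=4\not<4$), you set up the conjugation action on $\mathrm{Syl}_p(G)$ and the standard fact that a Sylow $p$-subgroup fixes only its own point, and then you explicitly hand the decisive step --- ruling out every transitive subgroup of $S_{22}$ and $S_{21}$ with the prescribed Sylow count --- back to Hall. But that sifting step \emph{is} the theorem for these two cases; everything before it is routine. Nothing in your outline explains why, say, no group can act transitively on $21$ points with point stabilizer $N_G(P_5)$ of index $21$, and Hall's actual treatment of these cases requires substantive additional arguments (transfer/fusion analysis of the normalizer, not merely a list of transitive groups). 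There is also a small slip: for $n_3=22$ the non-fixed points number $21=7\cdot 3$ and for $n_5=21$ they number $20=4\cdot 5$, not ``$3p$'' as you wrote. As it stands your proposal proves one of the three claims and sketches, without closing, the two that cannot be obtained from Lemma \ref{r}; for those two the citation to \cite{hall} remains load-bearing.
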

\begin{lemma} \cite[Lemma 2.1]{dr}\label{2.1dr}
Let $G$ be a group of order $n={p_1}^{\alpha_1}\cdots{p_k}^{\alpha_k}$, where $p_1,\cdots,p_k$ are distinct primes. Let $\psi(G)\,>\,{\dfrac{r}{s}{\psi(C_n)}}$, for some integers $r,s$. Then there exists a cyclic subgroup $\langle{x}\rangle$ such that
$$[G:\langle{x}\rangle]\,<\,{\dfrac{s}{r}}\cdot{\dfrac{p_1+1}{p_1}}\cdots{\dfrac{p_k+1}{p_k}}.$$
\end{lemma}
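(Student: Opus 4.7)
The plan is to take $\langle x\rangle$ to be a cyclic subgroup of $G$ of maximum order $m$, and then to sandwich $\psi(G)$ between a crude upper bound in terms of $m$ and a product lower bound for $(r/s)\psi(C_n)$ obtained from Lemma~\ref{cyclicsum}. Rearranging the resulting two-sided estimate will produce exactly the claimed bound on $[G:\langle x\rangle]$.

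Concretely, I would first pick $x\in G$ of largest possible order $m$. Since $o(g)=|\langle g\rangle|$ for every $g\in G$, this choice makes $\langle x\rangle$ a cyclic subgroup of $G$ of maximum order, and by that maximality $o(g)\le m$ for every $g\in G$. Summing over $G$ therefore gives the crude upper bound
\[
\psi(G) \;=\; \sum_{g\in G} o(g) \;\le\; m\,|G| \;=\; mn.
\]

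Next I would bound $\psi(C_n)$ from below explicitly. Applying the formula in Lemma~\ref{cyclicsum}(1) to each Sylow subgroup $P_i$ of $C_n$ and multiplying these together as in Lemma~\ref{cyclicsum}(2) gives
\[
\psi(C_n) \;=\; \prod_{i=1}^{k}\frac{p_i|P_i|^{2}+1}{p_i+1} \;\ge\; n^{2}\prod_{i=1}^{k}\frac{p_i}{p_i+1},
\]
where the inequality just drops the $+1$ in each factor.

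Feeding the two estimates into the hypothesis $\psi(G)>\tfrac{r}{s}\psi(C_n)$ yields
\[
mn \;\ge\; \psi(G) \;>\; \frac{r}{s}\,n^{2}\prod_{i=1}^{k}\frac{p_i}{p_i+1},
\]
and dividing through by $mn$ gives exactly $[G:\langle x\rangle] = n/m < (s/r)\prod_{i=1}^{k}(p_i+1)/p_i$, as required. The argument is short and essentially computational; the only point that needs a moment of thought is recognising that the maximum element order of $G$ coincides with the maximum order of a cyclic subgroup of $G$, which is what legitimises the crude bound $\psi(G)\le mn$. I do not anticipate any substantive obstacle.
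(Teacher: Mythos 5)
Your argument is correct and is essentially the proof given in the cited source: choose $x$ of maximal element order so that $\psi(G)\le o(x)\,n$, bound $\psi(C_n)$ below by $n^2\prod_i p_i/(p_i+1)$ via Lemma~\ref{cyclicsum}, and divide. The one point you flag --- that the maximal element order equals the maximal order of a cyclic subgroup --- is indeed the only observation needed, and it is handled correctly.
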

\begin{lemma} \cite[Lemma 2.3]{dr}\label{2.3dr}
\begin{itemize}
\item[(a)] Let $p\in\{2,3,5\}$ and $a>0$. Then $p^{2a}>{{\dfrac{13}{12}}{\psi(C_{p^a})}}$.
\item[(b)] Let $\pi(m)\subseteq\{2,3,5\}$ and $m\geq2$. Then ${m^2}>{\dfrac{13}{12}{\psi(C_m)}}$.
\end{itemize}
\end{lemma}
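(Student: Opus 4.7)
The plan is to prove (a) by a short algebraic reduction using the closed form for $\psi(C_{p^a})$ from Lemma \ref{cyclicsum}(1), and then to deduce (b) from (a) via the multiplicativity statement in Lemma \ref{cyclicsum}(2).

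For part (a), substituting $\psi(C_{p^a}) = (p^{2a+1}+1)/(p+1)$ into the target inequality $p^{2a} > (13/12)\psi(C_{p^a})$ and clearing denominators gives, after collecting terms,
\[
p^{2a}(12-p) > 13.
\]
For $p\in\{2,3,5\}$ and $a\geq 1$ the factor $12-p$ is at least $7$ and $p^{2a}\geq 4$, so the left-hand side is at least $28$; the minimum value, attained at $(p,a)=(2,1)$, is $4\cdot 10=40$, comfortably above $13$. This settles (a) with plenty of slack.

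For part (b), write $m=2^{a}3^{b}5^{c}$ with $a,b,c\geq 0$ and $m\geq 2$. Lemma \ref{cyclicsum}(2) gives the factorization $\psi(C_m)=\psi(C_{2^a})\psi(C_{3^b})\psi(C_{5^c})$, and obviously $m^2$ factors the same way. A short auxiliary check using Lemma \ref{cyclicsum}(1) shows $\psi(C_{p^k})\leq p^{2k}$ for every prime $p$ and every $k\geq 0$, with equality only when $k=0$ (indeed $(p^{2k+1}+1)/(p+1)\leq p^{2k}$ reduces to $1\leq p^{2k}$). Since $m\geq 2$, at least one of $a,b,c$ is positive; applying the strict inequality from part (a) at that prime and the weak inequality $\psi(C_{q^k})\leq q^{2k}$ at the other two, and multiplying, yields $m^2 > (13/12)\psi(C_m)$.

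There is no real obstacle here: both parts reduce to elementary arithmetic. The only organizational point is to notice that in (b) one does not need to invoke (a) at every prime factor of $m$—using it once is already enough, because every factor $p^{2k}/\psi(C_{p^k})$ is at least $1$ for free, so the extra $13/12$ bought at a single prime is what carries the inequality.
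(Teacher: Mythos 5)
Your proof is correct: the reduction of (a) to $p^{2a}(12-p)>13$ is exact, and the deduction of (b) from (a) together with the easy bound $\psi(C_{q^k})\le q^{2k}$ and the multiplicativity of $\psi$ over coprime factors is sound. Note that the paper itself does not prove this statement --- it is quoted verbatim as Lemma 2.3 of the cited reference \cite{dr} --- but your argument is the natural one from the closed form in Lemma \ref{cyclicsum}(1), and I see no gap; the only cosmetic point is that your intermediate bound ``at least $28$'' multiplies minima attained at different primes, which is harmless since you immediately record the true minimum $40$ at $(p,a)=(2,1)$.
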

\begin{lemma} \cite[Lemma 2.4]{dr}\label{2.4dr}
Let $n={p_1}^{\alpha_1}{p_2}^{\alpha_2}\cdots{p_r}^{\alpha_r}$ be a positive integer, where $p_i$ are primes, ${p_1}<{p_2}<\cdots<{p_r}=p$ and ${\alpha_i}>0$, for each $1\,{\leq}\,i\,{\leq}\,r$. If $p\geq{13}$, then
$${\psi(C_n)}\geq{{\dfrac{5005}{1152}}{\dfrac{n^2}{p+1}}}.$$
\end{lemma}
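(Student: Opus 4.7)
The plan is to bound each Sylow factor of $\psi(C_n)$ from below by a simple fraction, multiply across, and then reduce the problem to a clean monotonicity statement about a single function of the largest prime $p$. Concretely, from Lemma \ref{cyclicsum}(1), for every prime divisor $p_i$ of $n$ with Sylow subgroup $P_i$ of $C_n$,
$$\psi(P_i)=\frac{p_i|P_i|^2+1}{p_i+1}\ \geq\ \frac{p_i}{p_i+1}|P_i|^2.$$
Using Lemma \ref{cyclicsum}(2) to multiply these bounds together and collecting $\prod_i |P_i|^2 = n^2$, I would obtain
$$\psi(C_n)\ \geq\ n^2\prod_{i=1}^{r}\frac{p_i}{p_i+1}.$$
So it suffices to show $\prod_{i=1}^{r}\frac{p_i}{p_i+1}\ \geq\ \dfrac{5005}{1152(p+1)}$.

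The next step is to identify the worst case. Each factor $\frac{p_i}{p_i+1}$ is less than $1$, so the left-hand side only decreases when more primes are added to the set $\{p_1,\ldots,p_r\}$. Since $p_r=p$ is fixed, the minimum over admissible prime sets is achieved when $\{p_1,\ldots,p_r\}$ equals the set of all primes up to $p$. Accordingly, I would define
$$f(p)\ :=\ (p+1)\prod_{\substack{q\ \text{prime}\\ q\leq p}}\frac{q}{q+1},$$
and aim to prove $f(p)\geq 5005/1152$ for every prime $p\geq 13$.

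The key observation is the trivial one-step recurrence: if $p$ is a prime and $p'$ denotes the largest prime strictly less than $p$, then $f(p) = \frac{p}{p'+1}\, f(p')$. For any odd prime $p'\geq 3$, both $p$ and $p'$ are odd, so $p\geq p'+2 > p'+1$, and hence $f(p)\geq f(p')$. Thus $f$ is monotonically nondecreasing on the primes $\geq 3$, and it is enough to verify the base case $f(13)=5005/1152$, which is a direct computation from the primes $2,3,5,7,11,13$. Plugging back gives $\psi(C_n)\geq n^2\cdot\frac{5005}{1152(p+1)}$, as required.

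I do not expect a genuine obstacle here: the argument only needs the crude lower bound $\psi(P_i)\geq\frac{p_i}{p_i+1}|P_i|^2$ (the additive $+1$ in the exact formula is discarded), together with the two easy facts that adding more primes shrinks the product and that the prescribed constant $5005/1152$ is exactly the value of $f$ at $p=13$. The only thing one must be careful about is to note that $p_1,\ldots,p_r$ need not be all primes up to $p$, which is why the minimization-over-prime-sets step is where the stated constant comes from.
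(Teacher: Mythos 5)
Your argument is correct and complete: the reduction $\psi(C_n)\geq n^2\prod_i \frac{p_i}{p_i+1}$, the observation that the worst case is the full set of primes up to $p$, and the monotonicity of $f(p)=(p+1)\prod_{q\leq p}\frac{q}{q+1}$ via the recurrence $f(p)=\frac{p}{p'+1}f(p')$ all check out, and indeed $f(13)=5005/1152$. The paper itself gives no proof of this lemma (it is quoted from the cited reference), but your derivation is exactly the natural one behind the stated constant, and it matches in spirit the bound $\psi(C_n)\geq \frac{2}{p+1}n^2$ of Lemma~\ref{cyclicsum}(2), which is the same estimate using only the prime $2$.
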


By the proof given in \cite{dr} for Lemma 2.2, we have:
\begin{lemma} \label{big}
Let $p$ be a prime number and $a,b>0$. Then ${\psi(C_{p^{a+b}})}>{\psi(C_{p^a})}{\psi(C_{p^{b}})}$.
\end{lemma}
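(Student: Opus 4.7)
The plan is to apply the closed-form expression for $\psi(C_{p^r})$ given in Lemma \ref{cyclicsum}(1), namely $\psi(C_{p^r}) = (p^{2r+1}+1)/(p+1)$, and reduce the inequality to a polynomial identity in $p$. Clearing denominators, the desired inequality
$$\psi(C_{p^{a+b}}) > \psi(C_{p^a})\,\psi(C_{p^b})$$
is equivalent to
$$(p+1)\bigl(p^{2(a+b)+1}+1\bigr) > \bigl(p^{2a+1}+1\bigr)\bigl(p^{2b+1}+1\bigr).$$

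Next I would expand both sides. The left-hand side becomes $p^{2(a+b)+2} + p^{2(a+b)+1} + p + 1$, while the right-hand side becomes $p^{2(a+b)+2} + p^{2a+1} + p^{2b+1} + 1$. After canceling the common terms $p^{2(a+b)+2}$ and $1$, the inequality reduces to
$$p^{2(a+b)+1} + p > p^{2a+1} + p^{2b+1}.$$

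The final step is to factor the difference as
$$p^{2(a+b)+1} - p^{2a+1} - p^{2b+1} + p = p\bigl(p^{2a}-1\bigr)\bigl(p^{2b}-1\bigr),$$
which is manifestly positive because $p \geq 2$ and $a,b \geq 1$. There is no real obstacle here; the entire lemma is a short algebraic manipulation once the Sylow formula is substituted, and the authors note the argument is essentially the same as in \cite{dr}. The only thing worth double-checking is the factorization step, since an error there is the most likely source of trouble in such a compact calculation.
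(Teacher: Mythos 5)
Your computation is correct: substituting the closed form $\psi(C_{p^r})=(p^{2r+1}+1)/(p+1)$ from Lemma \ref{cyclicsum}(1), clearing denominators, and factoring the difference as $p\bigl(p^{2a}-1\bigr)\bigl(p^{2b}-1\bigr)>0$ all check out. The paper gives no proof of its own, merely deferring to the proof of Lemma 2.2 in \cite{dr}, and your direct algebraic verification is exactly the argument being invoked there.
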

\section{\bf{Main Results}}
In this section we prove the modified version of Herzog's Conjecture.
\begin{lemma}\label{nonsolvablemyselfbozorg}
Let $G$ be a non-solvable group of order n and $p\mid{n}$. Furthermore, $\psi(G)={\dfrac{211}{1617}{\psi(C_n)}}$ and $P\in{{\rm Syl}_p(G)}$ is cyclic and normal in $G$. Then $P$ has a normal $p$-complement $K$ in $G$, where $\psi(K)=\dfrac{211}{1617}\psi(C_{|K|})$.
\end{lemma}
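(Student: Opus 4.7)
The plan is to use the factorization of $\psi(C_n)$ across the cyclic normal Sylow $p$-subgroup $P$ together with Lemma \ref{sylownormal} to force an equality that pins $P$ down as central, and then to invoke Schur--Zassenhaus to produce the desired complement.

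First, I would write $n = p^a m$ with $(p,m)=1$. By Lemma \ref{cyclicsum}(2) we have $\psi(C_n) = \psi(P)\,\psi(C_m)$. Since $P$ is a cyclic normal Sylow subgroup, Lemma \ref{sylownormal} applies and yields $\psi(G) \leq \psi(P)\,\psi(G/P)$, with equality if and only if $P \leq Z(G)$. Substituting the hypothesis $\psi(G) = \tfrac{211}{1617}\psi(C_n)$ and cancelling the nonzero factor $\psi(P)$ gives $\psi(G/P) \geq \tfrac{211}{1617}\psi(C_m)$.

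Next, I would observe that $G/P$ is non-solvable; otherwise, being an extension of the solvable cyclic group $P$ by a solvable group, $G$ would itself be solvable, contradicting the hypothesis. Thus the non-solvable upper bound recalled in the introduction (from \cite{dr,twocriteria}) applies to $G/P$, giving $\psi(G/P) \leq \tfrac{211}{1617}\psi(C_m)$. The two inequalities force equality throughout; in particular equality in Lemma \ref{sylownormal} holds, so $P \leq Z(G)$.

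Finally, because $P$ is a central normal Sylow $p$-subgroup and $\gcd(|P|,|G/P|)=1$, Schur--Zassenhaus furnishes a complement $K$ of $P$ in $G$, and the centrality of $P$ makes $K$ normal with $G = P \times K$, so $K$ is the desired normal $p$-complement. Lemma \ref{directproduct} then gives $\psi(G) = \psi(P)\,\psi(K)$; combining this with $\psi(C_n) = \psi(P)\,\psi(C_{|K|})$ and cancelling $\psi(P)$ from the relation $\psi(G) = \tfrac{211}{1617}\psi(C_n)$ yields $\psi(K) = \tfrac{211}{1617}\psi(C_{|K|})$, as required. The only real obstacle is conceptual: recognising that the entire argument is an ``equality propagates'' one, where any slack in the chain $\psi(G) \leq \psi(P)\psi(G/P) \leq \psi(P)\cdot \tfrac{211}{1617}\psi(C_m) = \tfrac{211}{1617}\psi(C_n)$ would contradict the hypothesised equality, and that non-solvability of $G/P$ is exactly what unlocks the upper bound on $\psi(G/P)$.
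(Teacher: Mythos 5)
Your proof is correct and follows essentially the same route as the paper's: apply Lemma \ref{sylownormal}, cancel $\psi(P)=\psi(C_{|P|})$, use the solvability criterion of \cite{dr} on the non-solvable quotient $G/P$ to force equality throughout and hence $P\leq Z(G)$, then split off the complement $K\cong G/P$. The only cosmetic difference is that the paper invokes Burnside's normal $p$-complement theorem where you use Schur--Zassenhaus; both are immediate once $P$ is a central normal Sylow subgroup.
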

\begin{proof}
By Lemma \ref{sylownormal}, we have
\begin{equation}
\label{2}
\psi(P)\psi(G/P)\geq{\psi(G)}=\dfrac{211}{1617}\psi(C_n)=\dfrac{211}{1617}{\psi(C_{|P|})\psi(C_{|G/P|})}.
\end{equation}
Therefore
$$\psi(G/P)\geq\dfrac{211}{1617}{\psi(C_{|G/P|})}.$$
Now by the non-solvability of $G$ and the main result of \cite{dr}, we conclude that $\psi(G/P)=\dfrac{211}{1617}{\psi(C_{|G/P|})}$. It follows that the equality holds in (\ref{2}). Thus $P\,\leq\,Z(G)$ by Lemma \ref{sylownormal}. Therefore by Burnside's normal $p$-complement theorem, there exists $K\,\unlhd\,G$ such that $G=P\times{K}$.
\end{proof}

\theorem{ \label{p5}
Suppose that $G$ is a non-solvable group of order $n$, and $p=5$ is the largest prime divisor of $n$. Furthermore, $\psi(G)=\dfrac{211}{1617}\psi(C_n)$. Then $G=A_5$.}
\begin{proof}
By the assumption $n=2^{\alpha}3^{\beta}5^{\gamma}$. We note that $G$ has no normal Sylow subgroup. By Lemma \ref{cyclicsum}, we have
\begin{equation}
\label{6}
\psi(G)={\dfrac{211}{1617}}{\psi(C_n)}>{\dfrac{211}{1617}}\cdot{\dfrac{2^{2{\alpha}+1}}{3}}\cdot{\dfrac{3^{2{\beta}+1}}{4}}\cdot{\dfrac{5^{2{\gamma}+1}}{6}}={\dfrac{211}{1617}}\cdot{\dfrac{5}{12}}n^2.
\end{equation}
Therefore, there exists $x\in{G}$ such that $[G:\langle{x}\rangle]<19$.

If $[G:\langle{x}\rangle]<10$, then by the non-solvability of $G$ and Lemma \ref{2p}, $G$ has a normal Sylow $5$-subgroup, which is a contradiction. Hence $10\leq[G:\langle{x}\rangle]<19$. Now we consider the following cases:
\begin{itemize}

\item[{\bf(a)}] Let {$[G:\langle{x}\rangle]=18$.

Then there exists $P_5\in{{\rm Syl}_5(G)}$ such that $P_5\,\leq\,\langle{x}\rangle$. Therefore
$$18=[G:\langle{x}\rangle]=[G:N_G(P_5)][N_G(P_5):\langle{x}\rangle]=(1+5k)[N_G(P_5):\langle{x}\rangle].$$
Hence $[G:N_G(P_5)]=6$ and $[N_G(P_5):\langle{x}\rangle]=3$. Let $H={\rm core}_G(\langle{x}\rangle)$. Now by Lemma \ref{core}, $[\langle{x}\rangle{:H}]<[G:\langle{x}\rangle]=18$. If $5\nmid{[\langle{x}\rangle:H]}$, then $H$ contains a Sylow $5$-subgroup of $G$, say $P_5$, implying $P_5{\unlhd}G$, a contradiction. Hence $5\mid{[\langle{x}\rangle{:H}]}$. By the non-solvability of $G/H$, $[\langle{x}\rangle{:H}]=10$ and $|G/H|=180$. Since $A_5{\times}C_3$ is the only non-solvable group of order $180$, we conclude that $G/H{\cong}A_5{\times}C_3$ and $\psi(G/H)=1237$. Now using Lemma \ref{subgroup} and (\ref{6}), we have
$${\dfrac{211}{1617}\cdot{\dfrac{5}{12}}n^2}<{\psi(G)}\leq{\psi(G/H)}{|H|^2}=1237(n/180)^2,$$
which is a contradiction.}
\item[{\bf(b)}] Let {$[G:\langle{x}\rangle]=16$.

By Lemma \ref{A/B}, we conclude that G is solvable contradicting our hypothesis.
}
\item[{\bf(c)}] Let {$[G:\langle{x}\rangle]=15$.

Similar to the previous cases, $\langle{x}\rangle$ contains a Sylow $2$-subgroup of $G$, say $P_2$. Using Lemma \ref{minsylow}, $G$ has a normal $2$-complement, say $N$. Hence by the Feit-Thompson theorem, $N$ is solvable contradicting the non-solvability of $G$.
}

\item[{\bf(d)}] Let {$[G:\langle{x}\rangle]=10$.

\noindent There exists $P_3\in{{\rm Syl}_3(G)}$ such that $P_3\leq{\langle{x}\rangle}$. Then $\langle{x}\rangle\,{\leq}\;{N_G(P_3)}$ and we have
$$10=[G:\langle{x}\rangle]=[G:N_G(P_3)][N_G(P_3):\langle{x}\rangle]=(1+3k)[N_G(P_3):\langle{x}\rangle].$$
Then $N_G(P_3)=\langle{x}\rangle$. We claim that $N_G(P_3)$ is a maximal subgroup of $G$. Suppose that $N_G(P_3)<L\leq{G}$. Hence $N_L(P_3)=N_G(P_3)$ and $10=[G:L][L:N_L(P_3)]$ implying $[G:L]=1$. Now Lemma \ref{maximalabeli} implies that $G$ is solvable, a contradiction.  }
\item[{\bf(e)}] Let {$[G:\langle{x}\rangle]=12$.

There exists $P_5\in{{\rm Syl}_5(G)}$ such that $P_5\leq{\langle{x}\rangle}$. As we argued, we have
$$12=(1+5k)[N_G(P_5):\langle{x}\rangle].$$
Thus $[G:N_G(P_5)]=6$ and $[N_G(P_5):\langle{x}\rangle]=2$. Let $H={\rm core}_G(\langle{x}\rangle)$. Hence by Lemma \ref{core}, $[\langle{x}\rangle:H]<[G:\langle{x}\rangle]=12$.

Similarly to Case (a), $5\mid[\langle{x}\rangle:H]$, and so $[\langle{x}\rangle:H]=5$ or $10$. Now we consider each case:

$\bullet$ Let $[\langle{x}\rangle:H]=10$.

In this case, $|G/H|=120$ and by the non-solvability of $G/H$, we have $\psi(G/H)\leq{663}$. By Lemma \ref{subgroup} and (\ref{6}), we conclude that:
$$\dfrac{211}{1617}\cdot\dfrac{5}{12}n^2<\psi(G)\leq{\psi(G/H)}{|H|^2}\leq663(n/120)^2,$$
which is a contradiction.

$\bullet$ Let {$[\langle{x}\rangle:H]=5$}.

In this case, $|G/H|=60$, and so $G/H{\cong}A_5$. As a result, $H$ is a maximal normal subgroup of $G$. On the other hand, $H$ is cyclic, so $H\,\leq\,C_G(H)$. Then $H=C_G(H)$ or $G=C_G(H)$. The Normalizer-Centralizer Theorem implies that $G=C_G(H)$, and so $H\,\leq\,{Z(G)}$. Therefore $H=Z(G)$ and $G/Z(G){\cong}A_5$. Hence ${G'Z(G)/Z(G)}\cong{A_5}$ implying $G=G'Z(G)$, and we have $\dfrac{G'}{G'{\cap}Z(G)}{\cong}A_5$. On the other hand, Lemma \ref{G'} implies that $G'$ is perfect and $G'
/Z(G')\cong{A_5}$. By the fact that the Schur multiplier of $A_5$ is equal to 2, we have $G'\cong{{\rm SL}(2,5)}$ or $G'\cong{A_5}$.

If $G'{\cong}{\rm SL}(2,5)$, then $|G'{\cap}Z(G)|=2$ and we have
$$\dfrac{G}{G'{\cap}Z(G)}\cong{\dfrac{G'Z(G)}{G'{\cap}Z(G)}}\cong{\dfrac{G'}{G'{\cap}Z(G)}}{\times}{\dfrac{Z(G)}{G'{\cap}Z(G)}}\cong{A_5}\times{C_m},$$
where $m=n/120$. So by Lemmas \ref{subgroup} and \ref{directproduct}, we conclude that
$$\psi(G)\leq{\psi(\dfrac{G}{G'{\cap}Z(G)})}{|G'{\cap}Z(G)|^2}\leq{\psi(A_5)\psi(C_m)}{|G'{\cap}Z(G)|^2}=4\cdot211{\psi(C_m)}.$$
Now by (\ref{6}) and some simple calculations, we obtain $72000m^2<77616\psi(C_m)<78000\psi(C_m)$. Hence $12m^2<13\psi(C_m)$. By Lemma \ref{2.3dr}, $m=1$, $n=120$ and $G={\rm SL}(2,5)$. It means that, $663=\psi(G)=\dfrac{211}{1617}\psi(C_{120})>824$, a contradiction.

If $G'\cong{A_5}$, then $|G'{\cap}Z(G)|=1$ and $G={G'}\times{Z(G)}\cong{A_5}\times{Z(G)}$.

If $1\neq Z(G)$, then $|Z(G)|={2^{\alpha-2}}{3^{\beta-1}}{5^{\gamma-1}}$ and by Lemmas \ref{directproduct} and \ref{big}, we have
\begin{align*}
\psi(G)=\psi(A_5{\times}Z(G))&\leq{\psi{(A_5)}\psi{(Z(G))}} = 211{\psi(C_{2^{\alpha-2}})\psi(C_{3^{\beta-1}})\psi(C_{5^{\gamma-1}})} \\
&< {211}\cdot{\dfrac{\psi(C_{2^{\alpha}})}{\psi(C_4)}}\cdot{\dfrac{\psi(C_{3^{\beta}})}{\psi(C_3)}}\cdot{\dfrac{\psi(C_{5^\gamma})}{\psi(C_5)}}= \dfrac{211}{1617}{\psi(C_n)},
\end{align*}
which is a contradiction. That is why, we must have $Z(G)=1$ and $G={A_5}$.

}
\end{itemize}
\end{proof}
\remark{Let $G$ be a non-solvable group. Then there exists a non-abelian simple group $S$ such that $|S|\mid |G|$. If $3\notin\pi({G})$, then $S$ is a Suzuki group, but there exists no Suzuki group which $\pi(S){\subseteq}\{2,3,5,7,11\}$. Therefore, the order of each non-solvable group $G$, where $\pi(G)\subseteq\{2,3,5,7,11\}$ is divisible by $3$. }
\theorem{\label{p7}
Suppose that $G$ is a non-solvable group of order $n$, and $p=7$ is the largest prime divisor of $n$. Furthermore, $\psi(G)=\dfrac{211}{1617}\psi(C_n)$. Then $G=A_5\times{C_m}$, where $m=7^{\alpha}$, for some $\alpha\geq{1}$.}
\begin{proof}

In this case, we have either $\pi(G)=\{2,3,7\}$ or $\pi(G)=\{2,3,5,7\}$.

\textbf{Case (1)\,}Suppose that $\pi(G)=\{2,3,7\}$.

First of all, we notice that $G$ has no normal Sylow subgroup. Now the same as before, we have
\begin{equation}
\label{7}
\psi(G)=\dfrac{211}{1617}\psi(C_n)>\dfrac{211}{1617}\cdot{\dfrac{7}{16}}n^2.
\end{equation}
Hence there exists $x\in{G}$ such that $|G:\langle{x}\rangle|<18$. By Lemma \ref{2p}, if $[G:\langle{x}\rangle]<14$, then $G$ has a normal Sylow $7$-subgroup, a contradiction. So we must have $14\leq[G:\langle{x}\rangle]<18$.

{\bf(a)\,} Let $[G:\langle{x}\rangle]=14$.

There exists $P_3\in{{\rm Syl}_3(G)}$ such that $P_3\,\leq\,\langle{x}\rangle$. Similarly to the above discussion, we have
$$14=(1+3k)[N_G(P_3):\langle{x}\rangle].$$ Hence $[G:N_G(P_3)]=7$ and $[N_G(P_3):\langle{x}\rangle]=2$. Let $H={\rm core}_G(\langle{x}\rangle)$. Lemma \ref{core} implies that $[\langle{x}\rangle:H]<[G:\langle{x}\rangle]=14$. Then $3\mid[\langle{x}\rangle:H]$. Therefore by the non-solvability of $G/H$, $[\langle{x}\rangle:H]=12$ and $|G/H|=168$. It follows that $G/H{\cong}L_2(7)$ and $\psi(G/H)=715$. Now by Lemma \ref{subgroup} and (\ref{7}), we have
$$\dfrac{211}{1617}\cdot{\dfrac{7}{16}}n^2<\psi(G)\leq{\psi(G/H)}|H|^2=715(n/168)^2,$$
which is a contradiction.

{\bf(b)\,} Let $[G:\langle{x}\rangle]=16$.

Lemma \ref{A/B} implies that $G$ is solvable, a contradiction.

\indent \textbf{Case (2)\,}Suppose that $\pi(G)=\{2,3,5,7\}$.

By the assumption, we have
$$\psi(G)={\dfrac{211}{1617}}\psi(C_n)>{\dfrac{211}{1618}}\psi(C_n).$$
Now using Lemma \ref{2.1dr}, we conclude that there exists $x\in{G}$ such that
$$[G:\langle{x}\rangle]<{\dfrac{1618}{211}}\cdot{\dfrac{2+1}{2}}\cdot{\dfrac{3+1}{3}}\cdot{\dfrac{5+1}{5}}\cdot{\dfrac{7+1}{7}}<22.$$
If $[G:\langle{x}\rangle]<14$, then by Lemma \ref{2p}, $G$ has a normal cyclic Sylow $7-$subgroup, say $P_7$. Now by Lemma \ref{nonsolvablemyselfbozorg}, there exists $K\unlhd{G}$ such that $G=P_7{\times}K$ and $\psi(K)=\dfrac{211}{1617}\psi(C_{|K|})$. We notice that $K$ is non-solvable and $\pi(K)=\{2,3,5\}$. Hence it satisfies all the hypotheses of Theorem \ref{p5}. Therefore $K=A_5$ and $G=A_5\times{C_m}$, where $m=7^{\alpha}$ with $\alpha\geq1$.

Thus assume that $14\leq[G:\langle{x}\rangle]<22$.

{\bf(a)\,} Let $[G:\langle{x}\rangle]=14$.

Similarly to the above discussion, we have $14=(1+5k)[N_G(P_5):\langle{x}\rangle]$, for some $P_5\in{{\rm Syl}_5(G)}$. Hence $k=0$, and so by Lemma \ref{nonsolvablemyselfbozorg}, there exists a normal $5$-complement $K$, where $K$ is non-solvable, $\pi(K)=\{2,3,7\}$ and $\psi(K)={\dfrac{211}{1617}}\psi(C_{|K|})$. Therefore, $K$ satisfies all the hypotheses of Case (1). Then the same as before, it causes a contradiction.

{\bf(b)\,} Let $[G:\langle{x}\rangle]=16$.

By Lemma \ref{A/B}, $G$ is solvable, a contradiction.


{\bf(c)\,} Let $[G:\langle{x}\rangle]=21$.

There exists $P_5\in{{\rm Syl}_5(G)}$ such that $P_5\leq{\langle{x}\rangle}$. Then $21=(1+5k)[N_G(P_5):\langle{x}\rangle]$, and so $k=0$, by Lemma \ref{sylownumber}. Therefore, Lemma \ref{nonsolvablemyselfbozorg} implies that $G$ contains a normal $5$-complement satisfying all the hypotheses of Case (1). Hence it causes a contradiction.

{\bf(d)\,} Let $[G:\langle{x}\rangle]=15$.

There exists $P_7\in{{\rm Syl}_7(G)}$ such that $P_7\leq{\langle{x}\rangle}$. Then $15=(1+7k)[N_G(P_7):\langle{x}\rangle]$, and so $k=0$, by Lemma \ref{r}. Therefore, Lemma \ref{nonsolvablemyselfbozorg} implies that $P_7$ has a normal $7$-complement $K$, where $G={P_7}\times{K}$ and $K$ satisfies all the hypotheses of Theorem \ref{p5}. Hence $G=A_5{\times}P_{7}=A_5{\times}C_m$, where $m=7^{\alpha}$ and $\alpha{\geq}1$.


{\bf(e)\,} Let $[G:\langle{x}\rangle]=18$ or $20$.

Then exactly similar to part (d), we get that $G={A_5}{\times}C_m$, where $m=7^{\alpha}$ and $\alpha\geq{1}$.



\end{proof}
\theorem{\label{p11}
Suppose that $G$ is a non-solvable group of order $n$, and $p=11$ is the largest prime divisor of $n$. Furthermore, $\psi(G)=\dfrac{211}{1617}\psi(C_n)$. Then $G=A_5\times{C_m}$, where $m=11^{\beta}$, for some $\beta\geq{1}$.}
\begin{proof}
We consider the following cases:

\textbf{Case (1)}\,Let $\pi(G)=\{2,3,11\}$.

We note that in this case $G$ has no normal Sylow subgroup. Moreover, by the assumption, we have
$$\psi(G)={\dfrac{211}{1617}}\psi(C_n)>{\dfrac{211}{1618}}\psi(C_n).$$
Now by Lemma \ref{2.1dr}, there exists $x\in{G}$ such that
$$[G:\langle{x}\rangle]<{\dfrac{1618}{211}}\cdot{\dfrac{2+1}{2}}\cdot{\dfrac{3+1}{3}}\cdot{\dfrac{11+1}{11}}<17<2p.$$
Then Lemma \ref{2p} implies that $G$ has a normal Sylow $11$-subgroup, a contradiction.

\indent \textbf{Case (2)}\,Let $\pi(G)=\{2,3,5,11\}$.

The same as before, there exists $x\in{G}$ such that
$$[G:\langle{x}\rangle]<{\dfrac{1618}{211}}\cdot{\dfrac{3}{2}}\cdot{\dfrac{4}{3}}\cdot{\dfrac{6}{5}}\cdot{\dfrac{12}{11}}<21<2p.$$
By the non-solvability of $G$ and Lemma \ref{2p}, we conclude that there exists $P_{11}\in{{\rm Syl}_{11}(G)}$ which is cyclic and normal in $G$. Now using Lemma \ref{nonsolvablemyselfbozorg}, there exists $K\unlhd{G}$ such that $G=P_{11}\times{K}$ and $\psi(K)=\dfrac{211}{1617}\psi(C_{|K|})$. We note that $K$ is non-solvable and $\pi(K)=\{2,3,5\}$. Thus, it satisfies all the hypotheses of Theorem \ref{p5}. Hence $K=A_5$ and $G=A_5{\times}C_m$, where $m=11^{\beta}$ and $\beta\geq{1}$.

\indent \textbf{Case (3)}\,Let $\pi(G)=\{2,3,7,11\}$.

By Lemma \ref{2.1dr}, there exists $x\in{G}$ such that
$$[G:\langle{x}\rangle]<{\dfrac{1618}{211}}\cdot{\dfrac{3}{2}}\cdot{\dfrac{4}{3}}\cdot{\dfrac{8}{7}}\cdot{\dfrac{12}{11}}<20<2p.$$
Therefore by Lemma \ref{2p}, $G$ has a normal cyclic Sylow $11$-subgroup, say $P_{11}$. Hence using Lemma \ref{nonsolvablemyselfbozorg}, $G$ has a normal subgroup $K$ such that $\psi(K)=\dfrac{211}{1617}\psi(C_{|K|})$ and $K\cong{G/P_{11}}$. As a result, $K$ is non-solvable and $\pi(K)=\{2,3,7\}$. Thus, it satisfies all the hypotheses of Case (1) of Theorem \ref{p7}. So the same as before, it causes a contradiction.

\textbf{Case (4)}\,Let $\pi(G)=\{2,3,5,7,11\}$.

As the above discussion, there exists $x\in{G}$ such that
$$[G:\langle{x}\rangle]<{\dfrac{1618}{211}}\cdot{\dfrac{3}{2}}\cdot{\dfrac{4}{3}}\cdot{\dfrac{6}{5}}\cdot{\dfrac{8}{7}}\cdot{\dfrac{12}{11}}<23.$$
If $[G:\langle{x}\rangle]<22$, then there exists $P_{11}\in{{\rm Syl}_{11}(G)}$ which is cyclic and normal in $G$. Hence by Lemma \ref{nonsolvablemyselfbozorg}, there exists $K\unlhd{G}$ such that $K\cong{G/P_{11}}$ and $\psi(K)=\dfrac{211}{1617}\psi(C_{|K|})$. Thus, $K$ satisfies all the hypotheses of {{Case (2)}} of Theorem \ref{p7}. Therefore $K=A_5{\times}C_s$, where $s=7^{\alpha}$ and $\alpha\geq{1}$. Finally, we conclude that $G=A_5\times{C_m}$, where $m=7^{\alpha}11^\beta$ and $\alpha,\beta\geq{1}$.

If $[G:\langle{x}\rangle]=22$, then $\langle{x}\rangle$ contains a Sylow $7$-subgroup of $G$, say $P_7$. The same as before, $22=(1+7k)[N_G(P_7):\langle{x}\rangle]$. Hence $k=0$, by Lemma \ref{r}, and so there exists $K\unlhd{G}$ satisfying all the hypotheses of Case (2). Thus $G=A_5{\times}{C_m}$, where $m=7^{\alpha}11^{\beta}$ and $\alpha,\beta\geq{1}$.
\end{proof}

{\bf Main Theorem.} {\it {Suppose that $G$ is a non-solvable group of order $n$ and $\psi(G)=\dfrac{211}{1617}\psi(C_n)$. Then $G={A_5}\times{C_m}$, where $(30,m)=1$. }}
\begin{proof}
We prove the theorem by induction on the size of $p$, where $p$ is the largest prime divisor of $n$. By the assumption, $|\pi(G)|\geq{3}$, and so $p\geq{5}$. If ${5}\leq{p}\leq11$, then by the above theorems, we have $G=A_5{\times}C_m$, where $(30,m)=1$. Thus, we may assume that $p\geq13$.

Then by Lemma \ref{2.4dr}, we have
$${\psi(G)}={\dfrac{211}{1617}}{\psi(C_n)}\geq{{\dfrac{211}{1617}}\cdot{\dfrac{5005}{1152}\cdot{\dfrac{n^2}{p+1}}}}.$$
It means that, there exists $x\in{G}$ such that
\begin{equation}
\label{5}
[G:\langle{x}\rangle]<{\dfrac{1617}{211}}\cdot{\dfrac{1152}{5005}}(p+1)<2(p+1).
\end{equation}

Assume that $p\mid{[G:\langle{x}\rangle]}$. Then $pm=[G:\langle{x}\rangle]<2(p+1)$, and it follows that $m=1$ or $m=2$. If $m=1$, then using Lemma \ref{maximalabeli}, $G$ is solvable which is a contradiction. If $m=2$, then by (\ref{5}), we have $2p<{\dfrac{1617}{211}}\cdot{\dfrac{1152}{5005}}(p+1)$, and so $p<8$, a contradiction.

Thus, we have $p\nmid{[G:\langle{x}\rangle]}$. Therefore, there exists $P\in{{\rm Syl}_p(G)}$ such that $P\leq{\langle{x}\rangle}$. Similarly to the above discussion, we have
$$2(p+1)>[G:\langle{x}\rangle]=[G:N_G(P)][N_G(P):\langle{x}\rangle]=(1+kp)[N_G(P):\langle{x}\rangle].$$
It follows that $k=0$, $k=1$ or $k=2$. If $k=1$ or $k=2$, then $N_G(P)=\langle{x}\rangle$, and similar to part (d) of Theorem \ref{p5}, $N_G(P)=\langle{x}\rangle$ is a maximal subgroup of $G$. Now using Lemma \ref{maximalabeli}, $G$ is solvable, a contradiction. Thus $k=0$ and $P\unlhd{G}$. By Lemma \ref{nonsolvablemyselfbozorg}, there exists $K\unlhd{G}$ such that $G=P\times{K}$, $K\cong{G/P}$ and $\psi(K)={\dfrac{211}{1617}}{\psi(C_{|K|})}$. Moreover, it is obvious that $K$ is non-solvable. Now by the inductive hypotheses, $K=A_5{\times}C_s$, where $(30,s)=1$. Thus $G=A_5{\times}C_m$, where $m=s|P|$ and $(30,m)=1$.

\end{proof}

\end{document}